\newcommand{\weq}{\ = \ }
\newcommand{\wle}{\ \le \ }
\newcommand{\wsim}{\ \sim \ }
\newcommand{\wlesim}{\ \lesssim \ }
\newcommand{\wasymp}{\ \asymp \ }
\newcommand{\wll}{\ \ll \ }
\newcommand{\wgg}{\ \gg \ }
\definecolor{defcolor}{rgb}{0,0,1}
\newcommand{\E}{\mathbb{E}}
\newcommand{\pr}{\mathbb{P}}
\newcommand{\Var}{\operatorname{Var}}
\newcommand{\nquad}{\hspace{-1em}}
\newcommand{\floor}[1]{\left\lfloor #1 \right\rfloor}
\newcommand{\cE}{\mathcal{E}}
\newcommand{\abs}[1]{\mathopen{}\mathclose\bgroup\left|#1\right|\egroup}
\newcommand{\gesim}{\gtrsim}
\newcommand{\lesim}{\lesssim}
\newcommand{\cB}{\mathcal{B}}
\newcommand{\cC}{\mathcal{C}}
\newcommand{\cF}{\mathcal{F}}
\newcommand{\Gsub}{G^{(n_0)}}
\spnewtheorem{assumption}[theorem]{Assumption}{\bfseries}{\itshape}
\newcommand{\Erdos}{Erd\H{o}s\xspace}
\newcommand{\Renyi}{R{\'e}nyi\xspace}
\newcommand{\glink}{\textnormal{link}}
\newcommand{\gtwostar}{\textnormal{2-star}}
\newcommand{\gthreestar}{\textnormal{3-star}}
\newcommand{\gthreepath}{\textnormal{3-path}}
\newcommand{\gthreepan}{\textnormal{3-pan}}
\newcommand{\gfourcycle}{\textnormal{4-cycle}}
\newcommand{\gfourstar}{\textnormal{4-star}}
\newcommand{\gfourpath}{\textnormal{4-path}}
\newcommand{\gchair}{\textnormal{chair}}
\newcommand{\gtriangle}{\textnormal{triangle}}
\newcommand{\glinktwo}{\glink^2}
\newif\ifshowb\showbtrue 
\newif\ifshowr\showrtrue   
\begin{document}

\title{Parameter estimators of random intersection graphs with thinned communities}

\mainmatter
\title{Parameter estimators of sparse random intersection graphs with thinned communities}
\titlerunning{Parameter estimators of random intersection graphs with thinned communities}
\author{Joona Karjalainen, Johan S.H. van Leeuwaarden and Lasse Leskel\"a}
\authorrunning{J. Karjalainen, J.S.H. van Leeuwaarden and L. Leskel\"a}
\tocauthor{Joona Karjalainen, Johan S.H. van Leeuwaarden, Lasse Leskel\"a}
\institute{
Aalto University, Finland  \\
Eindhoven University of Technology, The Netherlands
}
\maketitle

\begin{abstract}
This paper studies a statistical network model generated by a large number of randomly sized overlapping communities, 
where any pair of nodes sharing a community is linked with probability $q$ via the community. In the special case with $q=1$ the model reduces to a random intersection graph which is known to generate high levels of transitivity also in the sparse context. The parameter $q$ adds a degree of freedom and leads to a parsimonious and analytically tractable network model with tunable density, transitivity, and degree fluctuations. We prove that the parameters of this model can be consistently estimated in the large and sparse limiting regime using moment estimators based on partially observed densities of links, 2-stars, and triangles.
\end{abstract}


\section{Introduction}

Networks often display transitivity or clustering, the tendency for nodes to be connected if they share a mutual neighbor. Random graphs can statistically model networks with clustering after adding a community structure of small relatively dense subgraphs. Triangles, or other short cycles, then occur predominantly within and not between the communities, and clustering becomes tunable through adapting the community structure. 

There are various ways to install community structure, for instance by locally adding 
small dense graphs~\cite{Ball_Britton_Sirl_2013,Coupechoux_Lelarge_2014,Stegehuis_VanDerHofstad_VanLeeuwaarden_2016_Epidemic,VanDerHostad_VanLeeuwaarden_Stegehuis_2015}. This creates nonoverlapping communities. Another way is to introduce overlapping communities through a random intersection graph (RIG) which can be defined as the 2-section of a random inhomogeneous hypergraph where hyperedges correspond to overlapping communities \cite{Karonski_Scheinerman_Singer-Cohen_1999}. RIGs have attractive analytical features, for example admitting tunable transitivity (clustering coefficient) and power-law degree distributions \cite{Deijfen_Kets_2009,Bloznelis_2013,Bloznelis_Leskela_2016_long}.
However, by construction the RIG community structure is rigid, in the sense that every community corresponds to a clique.
In this paper we relax this property and consider an extension of the RIG, a thinned RIG where nodes within the same community are linked with some probability $q\in[0,1]$ via that community, independently across all node pairs. 


The RIG and thinned RIG  are known to generate high levels of transitivity, even in sparse regimes where nodes have finite mean degrees in the large-network limit \cite{Deijfen_Kets_2009,Petti_Vempala_2017}. In \cite{Petti_Vempala_2017} it is shown that the community density $q$ can be exploited to tune both triangle and 4-cycle densities. 
In this paper we also exploit the additional freedom offered by $q$, but for controlling the density of 2-stars instead of 4-cycles. We derive scaling relations between the model parameters to create large, sparse, clustered networks, in which the number of links grows linearly in the number of nodes $n$, and the numbers of 2-stars and triangles grow quadratically in $n$. We investigate a special instance of the sparse model parameterized by a triplet $(\lambda,\mu, q)$ where $\lambda$ corresponds to the mean degree and $\mu$ to the mean number of community memberships of a node.
By analyzing limiting expressions for the link, 2-star and triangle densities, we derive moment estimators for $\lambda$, $\mu$, and $q$ based on observed frequencies of 2-stars and triangles. Taken together, the densities of links, 2-stars and triangles  prove sufficient to produce tunable sparsity (mean degree), degree fluctuations and transitivity. 

This work is part of an emerging area in network science that connects high-order local network structure such as subgraphs with statistical estimation procedures.  
The triangle is the most studied subgraph, because it not only describes transitivity, but also signals hierarchy and community structure~\cite{Ravasz_Barabasi_2003}. Other subgraphs, however, such as 2-stars, bifans, cycles, and cliques are also relevant for understanding network organization \cite{Benson_Gleich_Leskovec_2016,Tsourakakis_Pachocki_Mitzenmacher_2017}. In this paper we exploit a direct connection between the model parameters and the frequencies of links, 2-stars and triangles. 
A key technical challenge is to characterize the mean and variance of the subgraph frequencies, where the latter requires frequencies of all subgraphs that can be constructed by merging two copies of the subgraph at hand \cite{Frank_1979,Picard_Daudin_Koskas_Schbath_Robin_2008,Matias_Schbath_Birmele_Daudin_Robin_2006,Ostilli_2014}.  A byproduct of our analysis yields a rigorous proof of the graph-ergodic theorem (analogous to \cite[Theorem 3.2]{Karjalainen_Leskela_2017}) stating that the observed transitivity (a large graph average) of a large graph sample is with high probability close to the model transitivity (a probabilistic average).



 
\vspace{.3cm}
\noindent{\bf Notation.}
For a probability distribution $\pi$ on the nonnegative integers, we denote the moments by
$
 \pi_r
 \weq \sum_x x^r \pi(x)
$
and the factorial moments by
$
 (\pi)_r
 \weq \sum_x (x)_r \pi(x),
$
where $(x)_r \weq x (x-1) \cdots (x-r+1)$. For sequences $a_n$ and $b_n$, we denote $a \lesim b$ when $a_n \le c b_n$ for some $c>0$ and all $n$. $a \asymp b$ means "$a \lesim b$ and $b \lesim a$". For $a_n = (1+o(1))b_n$ we use the notation $a \sim b$, and for $a_n/b_n \rightarrow 0$ we use $a \ll b$. $X=o_\pr(1)$ is read as "$X$ converges to zero in probability".

\section{Model description}
\label{sec:mod}

We will study a statistical network model with $n$ nodes (individuals, users, vertices) and $m$ overlapping communities (attributes, blocks, groups, layers). The model is parameterized by $(n, m, \pi, q)$, where $\pi$ is a probability distribution on $\{0,\dots,n\}$ such that $\pi(x)$ corresponds to the proportion of communities of size~$x$, and $q \in [0,1]$ is the probability that two nodes are linked via a particular community.

A realization of the model corresponds to a collection of random subsets $V_k$ of $\{1,\dots,n\}$ indexed by $k=1,\dots,m$ representing the communities, and a collection of symmetric binary matrices $(C_{ij,k})_{ij}$, with $i,j = 1,\dots,n$, and $k =  1, \dots, m$. These objects are used to define an undirected random graph $G$ on node set $\{1,\dots,n\}$ with adjacency matrix
\begin{equation}
 \label{eq:Model}
 G_{ij}
 \weq \max_{k=1, \dots, m} \{B_{i,k} B_{j,k} C_{ij,k} \}, \quad i \ne j,
\end{equation}
where $B_{i,k} = 1_{V_k}(i)$ indicates whether node $i$ belongs to community $k$, and $C_{ij,k} = 1$ means that $i$ and $j$ are linked via community $k$, given that both $i$ and $j$ are members of community $k$.  We assume that $V_1, \dots, V_m$ are independent random sets with a common probability density $\pr(V_i=A)=\pi(|A|) \binom{n}{|A|}^{-1}$, and that $C_{ij,k}$ are independent $\{0,1\}$-valued random integers with mean $q$. Moreover, the arrays $(V_k)$ and $(C_{ij,k})$ are assumed independent.

The special case where $q=1$ corresponds to the so-called passive random intersection graph model \cite{Bloznelis_2013,Godehardt_Jaworski_2001}.  The special case where $\pi$ is a Dirac measure has been recently studied in \cite{Petti_Vempala_2017}.
%
%
%
The binomial community size distribution $\pi(x) = \binom{n}{x}(1-p)^{n-x}p^x$ gives another important special case of the model (referred to as Bernoulli model), 
%
which allows to smoothly interpolate between a standard \Erdos--\Renyi random graph (setting $p=1$) and a binomial random intersection graphs \cite{Frieze_Karonski_2016} (with $q=1$).



\section{Analysis of local model characteristics}

\subsection{Sparse parameter regime}

In this section we analyze how the model behaves when the number of nodes $n$ is large. We view a large network as a sequence of models with parameter quadruples $(n,m,\pi, q) = (n_\nu, m_\nu, \pi_\nu, q_\nu)$ indexed by a scale parameter $\nu = 1,2,\dots$ such that $n_\nu \to \infty$ as $\nu \to \infty$. For simplicity we omit the scale parameter from the notation.

Let 
$
 p_r
 \weq {(\pi)_r} /{(n)_r}
$
denote the probability that a particular community contains a given set of $r$ nodes. Then $mp_r$ equals the mean number of communities common to a particular set of $r$ nodes, and $\binom{n}{r} p_r = {(\pi)_r}/{r!}$ equals the expected number of $r$-sets of nodes contained in a single community. Because $mp_2 q$ equals the expected number of communities through which a given node pair is linked, it is natural to assume that $mp_2 q \ll 1$ when modeling a large and sparse network.
%
%
The following result confirms this.
%
%

\begin{proposition}
\label{the:LinkProbabilityPassive}
The probability that any particular pair of distinct nodes is linked equals
$
 \pr(\glink)
 = 1 - \left( 1 - q p_2 \right)^m.
$
Furthermore, $\pr(\glink) \ll 1$ if and only if $mp_2q \ll 1$,
in which case
\begin{equation}
\label{eq:LinkProbabilityPassive}
 \pr(\glink)
 \weq \left(1+ O(mp_2q) \right) m p_2 q.
\end{equation}
\end{proposition}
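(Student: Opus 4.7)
The plan is to compute the exact formula by independence, then derive both the equivalence and the asymptotic expansion from elementary inequalities for $1-(1-x)^m$.

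First I would fix a distinct pair $i,j$ and rewrite the event $\{G_{ij}=0\}$ as the intersection of the events $E_k = \{B_{i,k}B_{j,k}C_{ij,k}=0\}$ over $k=1,\dots,m$. Since the arrays $(V_k)$ and $(C_{ij,k})$ are independent across $k$ (and independent of each other), the $E_k$ are independent. For a single $k$, independence of $V_k$ and $C_{ij,k}$ gives $\pr(B_{i,k}B_{j,k}C_{ij,k}=1) = \pr(i,j \in V_k)\,\pr(C_{ij,k}=1) = p_2 q$, where the formula $\pr(i,j\in V_k)=p_2=(\pi)_2/(n)_2$ follows directly from the definition of $p_r$ stated in the text. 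Hence $\pr(\glink)=1-(1-qp_2)^m$.

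Next I would establish the equivalence using two-sided bounds on $f(x,m):=1-(1-x)^m$ for $x\in[0,1]$. Bernoulli's inequality gives $f(x,m)\le mx$, and the standard bound $1-x\le e^{-x}$ yields $f(x,m)\ge 1-e^{-mx}$. Applying these with $x=qp_2$ shows that if $mp_2q\ll 1$ then $\pr(\glink)\le mp_2q\ll 1$, while if along some subsequence $mp_2q\gtrsim 1$, then $\pr(\glink)\ge 1-e^{-mp_2q}$ stays bounded away from $0$, so $\pr(\glink)\not\ll 1$. This gives the "if and only if" claim.

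For the asymptotic expansion in \eqref{eq:LinkProbabilityPassive}, I would assume $mp_2q\ll 1$ (which implies $qp_2\ll 1$) and expand. Using $\log(1-x)=-x+O(x^2)$ for small $x$, write
\[
(1-qp_2)^m \weq \exp\!\bigl(-mqp_2 + O(m(qp_2)^2)\bigr) \weq 1 - mqp_2 + O\bigl((mqp_2)^2\bigr),
\]
where the last step uses $e^{-y}=1-y+O(y^2)$ for $y=mqp_2+O(m(qp_2)^2)\to 0$ and the fact that $m(qp_2)^2\le(mqp_2)\cdot qp_2 = O((mqp_2)^2/m)\cdot m = O((mqp_2)^2)$ since $qp_2\le mqp_2$. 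Subtracting from $1$ gives $\pr(\glink)=mqp_2(1+O(mqp_2))$.

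I don't expect a substantive obstacle here, since the argument is a clean application of independence and scalar inequalities; the only point requiring some care is making sure the error term in the expansion of $(1-qp_2)^m$ is bounded by the leading term $mqp_2$ multiplied by $mqp_2$ (rather than by $qp_2$), which follows from $qp_2\le mqp_2$.
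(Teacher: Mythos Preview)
Your proposal is correct and follows essentially the same approach as the paper: the exact formula via independence of the per-community events, and the two-sided bounds $1-e^{-mp_2q}\le \pr(\glink)\le mp_2q$ for the equivalence. The only cosmetic difference is in deriving \eqref{eq:LinkProbabilityPassive}: the paper uses Bonferroni's lower bound $\pr(\glink)\ge mp_2q-\binom{m}{2}(p_2q)^2$ directly, whereas you expand $(1-qp_2)^m$ through $\log$ and $\exp$; both routes give the $O((mp_2q)^2)$ error.
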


\subsection{Subgraph densities}

For an arbitrary graph $R$, the \emph{$R$-covering density} of the model is defined as the expected proportion of subgraphs\footnote{By subgraph we mean any subgraph, not just the induced ones.} of $G$ that are isomorphic to $R$. By symmetry, this quantity equals the probability that $G$ contains $R$ as a subgraph, when we assume that $V(R) \subset \{1,\dots,n\}$. Note that the $K_2$-covering density of the model is just the link density analyzed in Proposition~\ref{the:LinkProbabilityPassive}. The following result describes the covering densities of connected three-node graphs.

\begin{proposition}
\label{the:ModelDensities}
The probabilities that the model in the sparse regime $m p_2 q \ll 1$ contains as subgraph the 2-star and triangle are approximately
\begin{align}
 \label{eq:Twostar}
 \pr(\gtwostar)
 &\weq \left(1+O(mp_2q) \right) q^2 \Big( m p_3 + (m)_2 p_2^2 \Big), \\
 \label{eq:Triangle}
 \pr(\gtriangle)
 &\weq \left( 1 + O(mq p_2) \right) q^3 \Big( m p_3 + 3 (m)_2 p_2 p_3 + (m)_3 p_2^3 \Big).
\end{align}
\end{proposition}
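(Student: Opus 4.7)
The plan is to derive exact closed-form expressions for $\pr(\gtwostar)$ and $\pr(\gtriangle)$ by inclusion-exclusion on the edges of the target subgraph, combined with the independence of the community samples $V_1,\dots,V_m$, and then to read off the leading asymptotics from a binomial expansion in the sparse regime $mp_2q\ll 1$.

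\emph{Step 1 (per-community events).} Fix three distinct nodes labelled $1,2,3$. For each community index $k$ and each pair $e\in\{\{1,2\},\{1,3\},\{2,3\}\}$, let $E_k^e$ denote the event that both endpoints of $e$ lie in $V_k$ and that $C_{e,k}=1$. Since $V_k$ has the prescribed uniform distribution and the link indicators are independent Bernoulli$(q)$, one computes $\pr(E_k^e)=p_2q$, $\pr(E_k^e\cap E_k^f)=p_3q^2$ for distinct $e\ne f$ on $\{1,2,3\}$ (any two such pairs share a vertex), and $\pr(E_k^{12}\cap E_k^{13}\cap E_k^{23})=p_3q^3$. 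Crucially, the triples $(E_k^e)_e$ are i.i.d.\ across $k$.

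\emph{Step 2 (exact formulas).} Because $\{G_e=0\}=\bigcap_k\overline{E_k^e}$, any intersection $\bigcap_{e\in S}\{G_e=0\}$ factorises across $k$ with per-community factor computable by a one-step inclusion-exclusion from Step 1. Applying a second, edge-level, inclusion-exclusion to the 2-star event $\{G_{12}=G_{23}=1\}$ and the triangle event $\{G_{12}=G_{13}=G_{23}=1\}$ yields the exact identities
\begin{equation*}
\pr(\gtwostar)=1-2(1-p_2q)^m+(1-2p_2q+p_3q^2)^m,
\end{equation*}
\begin{equation*}
\pr(\gtriangle)=1-3(1-p_2q)^m+3(1-2p_2q+p_3q^2)^m-(1-3p_2q+3p_3q^2-p_3q^3)^m.
\end{equation*}

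\emph{Step 3 (sparse expansion).} Writing $\alpha=p_2q$, $\beta=p_3q^2$, $\gamma=p_3q^3$, expand each $(1-x)^m$ by the binomial theorem and group by total degree in $(\alpha,\beta,\gamma)$. For $\pr(\gtwostar)$, the contributions at total degree $0$, those linear in $\alpha$, and the pure $\alpha^2$ term cancel to $(-2+4)\binom{m}{2}\alpha^2=(m)_2\alpha^2$, while the linear-in-$\beta$ coefficient is $m$; together these give the leading $q^2(mp_3+(m)_2p_2^2)$. For $\pr(\gtriangle)$, an identical mechanism annihilates everything up to total degree two; the surviving contributions are $m\gamma$ (linear in $\gamma$), a cross-term $6\binom{m}{2}\alpha\beta=3(m)_2\alpha\beta$, and the cubic $6\binom{m}{3}\alpha^3=(m)_3\alpha^3$, combining to $q^3(mp_3+3(m)_2p_2p_3+(m)_3p_2^3)$.

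\emph{Main obstacle.} The only real effort is in bookkeeping the residual so that it appears as a multiplicative factor $1+O(m p_2 q)$ times the leading order. Every omitted contribution is either of total degree $\ge 3$ in $(\alpha,\beta,\gamma)$, and hence bounded by a constant multiple of $(m\alpha)^3$, or carries an extra factor of $\beta$ or $\gamma$ relative to the leading term. The elementary inequality $(\pi)_3\le(n-2)(\pi)_2$ gives $p_3\le p_2$, so $\beta,\gamma\le\alpha$, and each residual is dominated by $m\alpha$ times the identified leading order, yielding the advertised multiplicative $(1+O(mp_2q))$ error.
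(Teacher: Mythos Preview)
Your proof is correct, but it follows a genuinely different route from the paper's. The paper writes $\pr(\gtwostar)=\pr\big(\bigcup_{k\in[m]^2}\cF_k\big)$ with $\cF_k=\cE_{12,k_1}\cap\cE_{13,k_2}$, applies the union bound to get the upper estimate $\sum_k\pr(\cF_k)=q^2(mp_3+(m)_2p_2^2)$, and then verifies the matching Bonferroni lower bound by tabulating $\pr(\cF_k\cap\cF_\ell)$ over all overlap patterns of $(k_1,k_2,\ell_1,\ell_2)$; the triangle is handled analogously with $k\in[m]^3$, and the paper explicitly omits the ``lengthy computations'' needed for the second-order sum. You instead perform inclusion--exclusion on the \emph{edges}, exploit independence across communities to obtain exact closed forms such as $\pr(\gtwostar)=1-2(1-p_2q)^m+(1-2p_2q+p_3q^2)^m$, and then read off the asymptotics by binomial expansion.

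Your approach has two concrete advantages: it yields exact finite-$m$ formulas as a byproduct, and it replaces the case-by-case enumeration of community-tuple overlaps by a mechanical Taylor expansion whose error control (via $p_3\le p_2$, hence $\beta,\gamma\le\alpha$) is uniform. The paper's approach, on the other hand, makes the combinatorial origin of each leading term transparent---$mp_3$, $(m)_2p_2^2$, $(m)_3p_2^3$ correspond directly to the number of distinct communities supplying the edges---and its Bonferroni machinery is the same device used later for the $\kappa(R)$ bounds in Theorem~\ref{the:ObservedDensities}. Your Step~3 bookkeeping sketch is a touch informal (e.g.\ the triangle has residual terms such as $\binom{m}{2}\beta^2$ and $\binom{m}{3}\alpha^2\beta$ that need to be matched against the appropriate piece $m\gamma$ or $(m)_2\alpha\beta$ of the leading sum rather than against a single dominant term), but each such comparison does reduce to a factor $O(m\alpha)=O(mp_2q)$, so the argument closes.
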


\subsection{Model transitivity}
The transitivity (or global clustering coefficient) of a graph usually refers to the proportion of triangles among unordered node triplets which induce a connected graph. The \emph{model transitivity} of a random graph is usually defined by replacing the numerator and the denominator in the latter expression by their expected values\cite{Karjalainen_Leskela_2017}. In our case, by symmetry, the model transitivity equals
$
 \tau \weq {\pr(\gtriangle)}/{\pr(\gtwostar)},
$
and is characterized by the following result in the sparse parameter regime.

\begin{proposition}
\label{the:TransitivityPassive}
The model transitivity in the sparse regime $mp_2q \ll 1$ satisfies
\[
 \tau
 \weq \frac{p_3 q}{p_3 + (m-1) p_2^2} + o(1).
\]
\end{proposition}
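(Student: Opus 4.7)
The plan is to substitute the asymptotic expressions from Proposition~\ref{the:ModelDensities} directly into the definition $\tau = \pr(\gtriangle)/\pr(\gtwostar)$. After the common prefactors $(1+O(mp_2q))$ cancel (up to a combined $1+O(mp_2q)$ multiplicative error), a factor of $q^3/q^2 = q$ is pulled out, and dividing numerator and denominator by $m$ yields
\[
 \tau
 \weq (1+O(mp_2q))\, q \cdot \frac{p_3 + 3(m-1)p_2 p_3 + (m-1)(m-2)p_2^3}{p_3 + (m-1) p_2^2}.
\]
The main algebraic step is then to split the right-hand side into the target term $\frac{p_3 q}{p_3 + (m-1)p_2^2}$ plus a remainder
\[
 q \cdot \frac{3(m-1)p_2 p_3 + (m-1)(m-2)p_2^3}{p_3 + (m-1)p_2^2},
\]
and to bound this remainder by $o(1)$.

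For the remainder, I would estimate each of the two summands in the numerator separately, using the denominator bound $p_3 + (m-1)p_2^2 \ge p_3$ to kill the first and $p_3 + (m-1)p_2^2 \ge (m-1)p_2^2$ to kill the second. This gives
\[
 q \cdot \frac{3(m-1)p_2 p_3}{p_3 + (m-1)p_2^2} \wle 3q(m-1)p_2 \wle 3\, mp_2 q,
\]
and
\[
 q \cdot \frac{(m-1)(m-2)p_2^3}{p_3 + (m-1)p_2^2} \wle q(m-2)p_2 \wle mp_2 q,
\]
both of which tend to $0$ by the sparsity assumption $mp_2 q \ll 1$. Finally, since $\frac{p_3 q}{p_3 + (m-1)p_2^2} \le 1$, absorbing the $(1+O(mp_2q))$ prefactor also produces only an $o(1)$ correction.

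The hard part here is essentially conceptual rather than technical: one has to notice that even though $mp_2 \ll 1$ is not assumed (only $mp_2 q \ll 1$), the extra factor of $q$ coming from the ratio $q^3/q^2 = q$ is exactly what makes the cross-terms in the triangle density negligible compared to $p_3 + (m-1)p_2^2$. Once that observation is in place, the whole argument reduces to the two elementary bounds above and the substitution from Proposition~\ref{the:ModelDensities}.
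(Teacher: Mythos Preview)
Your proof is correct and follows essentially the same route as the paper's: substitute the densities from Proposition~\ref{the:ModelDensities}, factor out $q$, split off the leading term $\frac{p_3 q}{p_3+(m-1)p_2^2}$, and bound the remaining fraction by $O(mp_2q)=o(1)$. The only cosmetic difference is that the paper bounds the remainder in one stroke as $R\le 3mp_2$ before multiplying by $q$, whereas you handle the two summands separately with two different lower bounds on the denominator; the content is identical.
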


\begin{remark}
In the special case with $q=1$ the above result coincides with \cite[Corollary 1]{Godehardt_Jaworski_Rybarczyk_2012} and \cite[Theorem 3.2]{Bloznelis_2013}.
%
%
\end{remark}


\subsection{Degree mean and variance}

\begin{proposition}
\label{the:PassiveDegreeMeanVariance}
The degree $D$ of any particular node of the model in the sparse regime $mp_2q \ll 1$  satisfies
\begin{align*}
 \E(D)
 \wsim m n p_2 q,
 \qquad
 \Var(D)
 \wsim m n p_2 q \left( 1 + n q \left(\frac{p_3}{p_2} - p_2 \right) \right).
\end{align*}
\end{proposition}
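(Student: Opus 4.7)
The plan is to write the degree $D$ of a fixed node $i$ as a sum of edge indicators $D = \sum_{j \neq i} G_{ij}$ and reduce the computation to the link and 2-star covering densities already analysed in Propositions~\ref{the:LinkProbabilityPassive} and~\ref{the:ModelDensities}. By vertex exchangeability, $\E(G_{ij}) = \pr(\glink)$ for every $j \ne i$, so linearity of expectation gives $\E(D) = (n-1)\pr(\glink)$, and the claim $\E(D) \wsim n m p_2 q$ follows immediately from~\eqref{eq:LinkProbabilityPassive} together with $(n-1) \wsim n$.

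For the variance I would decompose
\[
 \Var(D) \weq \sum_{j \ne i} \Var(G_{ij}) \ + \sum_{\substack{j,k \ne i \\ j \ne k}} \Cov(G_{ij}, G_{ik}).
\]
Since each $G_{ij}$ is $\{0,1\}$-valued and $\pr(\glink) \to 0$, the diagonal terms satisfy $\Var(G_{ij}) = \pr(\glink)(1-\pr(\glink)) \wsim m p_2 q$, contributing $\wsim n m p_2 q$ in total. For the off-diagonal terms, the event $\{G_{ij}=1, G_{ik}=1\}$ is exactly the event that the specific 2-star on $\{i,j,k\}$ centred at $i$ appears as a subgraph of $G$, so by vertex exchangeability this probability equals $\pr(\gtwostar)$ and
\[
 \Cov(G_{ij}, G_{ik}) \weq \pr(\gtwostar) - \pr(\glink)^2.
\]

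Substituting the asymptotic forms from Propositions~\ref{the:LinkProbabilityPassive} and~\ref{the:ModelDensities} and using $(m)_2 p_2^2 = m^2 p_2^2 - m p_2^2$, the two leading $m^2 p_2^2 q^2$ contributions cancel and one obtains $\Cov(G_{ij}, G_{ik}) \wsim q^2 m (p_3 - p_2^2)$. Multiplying by $(n-1)(n-2) \wsim n^2$ and adding the diagonal piece gives
\[
 \Var(D) \wsim n m p_2 q + n^2 m q^2 (p_3 - p_2^2),
\]
which factorises into the stated form $n m p_2 q \bigl( 1 + n q (p_3 / p_2 - p_2) \bigr)$.

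The main obstacle is precisely this cancellation: both $\pr(\gtwostar)$ and $\pr(\glink)^2$ are of order $m^2 p_2^2 q^2$, so the covariance is a second-order effect and the $\wsim$ symbol for the final expression is not automatic. To make the argument rigorous I would retain the explicit multiplicative $(1 + O(m p_2 q))$ error bounds from Propositions~\ref{the:LinkProbabilityPassive} and~\ref{the:ModelDensities}, expand $\pr(\gtwostar) - \pr(\glink)^2$, and check that the remainder is negligible compared to $q^2 m(p_3 - p_2^2)$ in the intended parameter regime (where $p_3$ and $p_2^2$ are of comparable order after proper scaling). Everything else is routine bookkeeping.
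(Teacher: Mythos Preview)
Your approach is essentially identical to the paper's: write $D=\sum_{j\ne i}G_{ij}$, reduce the first two moments to $\pr(\glink)$ and $\pr(\gtwostar)$ via exchangeability, and then substitute the asymptotics from Propositions~\ref{the:LinkProbabilityPassive} and~\ref{the:ModelDensities} (the paper computes $\E(D^2)$ directly rather than splitting into diagonal and off-diagonal covariances, but this is the same calculation). You are in fact more careful than the paper in flagging the cancellation in $\pr(\gtwostar)-\pr(\glink)^2$; the paper simply writes this difference as $(1+O(mp_2q))\,q^2\bigl(mp_3+(m)_2p_2^2-m^2p_2^2\bigr)$ without further comment.
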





\section{Parameter estimation of sparse models}

Our goal is to fit the model parameters to a sparse and large graph sample of known size $n$ in a consistent way. For this we impose assumptions 
on the parameter sequence $(n_\nu, m_\nu, \pi_\nu, q_\nu)$, called the balanced sparse regime.

\begin{assumption}[Balanced sparse regime]
\label{ass:Balanced}
The ratio $m/n$, the factorial moments $(\pi)_1$, $(\pi)_2$, $(\pi)_3$, and the parameter $q$ converge to nonzero finite constants as the scale parameter tends to infinity.
\end{assumption}

Propositions \ref{the:TransitivityPassive} and \ref{the:PassiveDegreeMeanVariance} imply that in the balanced sparse regime, the mean degree $\lambda$, the degree variance $\sigma^2$, and the model transitivity $\tau$ converge to nonzero finite constants which are related to the model characteristics via the  formulas
\[
 \lambda \sim (m/n)(\pi)_2 q,
 \qquad
 \sigma^2 \sim \lambda \left( 1 + q \frac{(\pi)_3}{(\pi)_2} \right),
 \qquad
 \tau \sim \frac{(\pi)_3q}{(\pi)_3 + (m/n)(\pi)_2^2}.
\]
These are the three  model characteristics we wish to fit to real data. Single-parameter distributions $\pi$ are of special interest, as the parameter then  determines both $(\pi)_2$ and $(\pi)_3$, reducing the number of unknowns by one.

\subsection{Empirical subgraph counts}

Consider the model $G=(n,m,q,\pi)$ and assume that we have observed a subgraph $\Gsub$ induced by $n_0$ nodes. We wish to estimate one or more model parameters using the empirical subgraph counts in $\Gsub$ and the asymptotic relations developed in Section 3. Computationally efficient estimators are obtained by choosing a suitably low $n_0$.

Denote by $N_{K_2}(\Gsub)$ the number of links, by $N_{S_2}(\Gsub)$ the number of (induced or noninduced) subgraphs which are isomorphic to the 2-star, and by $N_{K_3}(\Gsub)$ the number of triangles in the observed graph $\Gsub$. These are asymptotically close to the expected subgraph counts by the following theorem.
\begin{theorem}
\label{the:ObservedDensities}
Consider the model in the balanced sparse regime {\rm(Assumption \ref{ass:Balanced})}. If $(\pi)_4 \lesim 1$ and $n_0 \gg n^{1/2}$, then the number of links in the observed graph $\Gsub$ satisfies 
\begin{equation}
 \label{eq:ObservedDensityLink}
 N_{K_2}(\Gsub)
 \weq (1+o_\pr(1) ) \E N_{K_2}(\Gsub).
\end{equation}
If also $(\pi)_6 \lesim 1$, and $n_0 \gg n^{2/3}$, then
\begin{align}
 \label{eq:ObservedDensityTwostar}
 N_{S_2}(\Gsub)
 &\weq \left(1+o_\pr(1) \right) \E N_{S_2}(\Gsub), \\
 \label{eq:ObservedDensityTriangle}
 N_{K^3}(\Gsub)
 &\weq \left(1+o_\pr(1) \right) \E N_{K_3}(\Gsub).
\end{align}
\end{theorem}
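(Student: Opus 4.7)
All three claims have the form $N_R(\Gsub) = (1+o_\pr(1))\,\E N_R(\Gsub)$ for $R \in \{K_2, S_2, K_3\}$, so the approach is a second-moment argument: by Chebyshev's inequality it suffices to show $\Var N_R(\Gsub) = o((\E N_R(\Gsub))^2)$ in each case. I would first combine Propositions~\ref{the:LinkProbabilityPassive}--\ref{the:ModelDensities} with Assumption~\ref{ass:Balanced} to record the magnitudes $\E N_{K_2}(\Gsub) \asymp n_0^2/n$ and $\E N_{S_2}(\Gsub) \asymp \E N_{K_3}(\Gsub) \asymp n_0^3/n^2$, which turn the variance targets into $\Var N_{K_2} \ll n_0^4/n^2$ and $\Var N_{S_2}, \Var N_{K_3} \ll n_0^6/n^4$. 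The thresholds $n_0 \gg n^{1/2}$ and $n_0 \gg n^{2/3}$ are then exactly what makes the diagonal (coincident) term $\E N_R$ small relative to $(\E N_R)^2$.

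\textbf{Variance decomposition via merge graphs.} Writing $N_R(\Gsub) = \sum_H \mathbf{1}\{H \subset G\}$ as a sum of indicators over the embeddings $H$ of $R$ into $\Gsub$, I expand
\begin{equation*}
 \Var N_R(\Gsub)
 \weq \sum_{H_1, H_2} \bigl( \pr(H_1 \cup H_2 \subset G) - \pr(H_1 \subset G)\,\pr(H_2 \subset G) \bigr),
\end{equation*}
and group the summands by the isomorphism type of the \emph{merge graph} $M = H_1 \cup H_2$, in the spirit of the Frank/Picard/Matias counting schemes cited in the introduction. For a merge type $M$ on $v$ vertices (with $s = 2|V(R)| - v$ shared vertices) the number of ordered pairs realising $M$ is $\Theta(n_0^v)$, and $\pr(M \subset G)$ admits, analogously to Propositions~\ref{the:LinkProbabilityPassive} and~\ref{the:ModelDensities}, an expansion into finitely many terms of the form $m^k q^{|E(M)|} p_{r_1} \cdots p_{r_k}$, indexed by the partitions of $E(M)$ into $k$ groups assigned to distinct communities. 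Under Assumption~\ref{ass:Balanced} each such term is of order $n^{k - r_1 - \cdots - r_k}$, and the largest factorial moment that can enter is $(\pi)_v$.

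\textbf{Case analysis.} For $R = K_2$ the only merges are $K_2$ itself, a 2-star, and a pair of disjoint edges. The coincident and $1$-shared-vertex contributions equal $\E N_{K_2} + \E N_{S_2} \asymp n_0^2/n + n_0^3/n^2$, while the disjoint contribution, after cancellation of $\pr(H_1)\pr(H_2)$, is bounded under $(\pi)_4 \lesim 1$ by $O(n_0^4/n^3)$; all three are $o(n_0^4/n^2)$ iff $n_0 \gg n^{1/2}$, proving~\eqref{eq:ObservedDensityLink}. For $R = S_2$ and $R = K_3$ the list of merge types is longer but still finite (copies sharing $3,2,1,0$ vertices, with subcases according to which common edges are retained for partial sharing), with vertex counts $v$ ranging up to $6$. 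Here the moment hypothesis $(\pi)_6 \lesim 1$ controls the most delicate terms, which arise from merges whose $v=6$ nodes all lie inside a single community. In every case the dominant contribution is the coincident term $\E N_R \asymp n_0^3/n^2$, so $\Var N_R \ll n_0^6/n^4$ iff $n_0 \gg n^{2/3}$, yielding~\eqref{eq:ObservedDensityTwostar} and~\eqref{eq:ObservedDensityTriangle} via Chebyshev.

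\textbf{Main obstacle.} The principal technical work is the enumeration of merge types for 2-stars and triangles and the verification that, for every merge $M$ that is not the coincident copy, the covariance contribution carries at least one extra factor of $1/n$ relative to the coincident term. This cancellation --- between $\pr(M \subset G)$ and $\pr(H_1)\pr(H_2)$ for non-overlapping copies, and within the expansion of $\pr(M \subset G)$ itself for overlapping ones --- is what prevents any weaker threshold than $n_0 \gg n^{2/3}$ from working, and checking it uniformly over the finitely many merge types is where the bulk of the calculation lies.
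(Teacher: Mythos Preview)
Your proposal is correct and follows essentially the same approach as the paper: a second-moment argument via Chebyshev, with the variance decomposed according to the overlap pattern of two embedded copies of $R$, the overlapping contributions bounded crudely by $\pr(M\subset G)\lesim n^{-\kappa(M)}$ (your expansion into terms $m^k q^{|E(M)|}p_{r_1}\cdots p_{r_k}$ is exactly how the paper derives these bounds), and the disjoint contribution handled by the cancellation $\pr(R\sqcup R'\subset G)-\pr(R\subset G)^2\ll\pr(R\subset G)^2$. The only cosmetic difference is that the paper groups by $|V(H_1)\cap V(H_2)|$ rather than by the isomorphism type of $H_1\cup H_2$, and it records the overlap bounds via a single combinatorial index $\kappa(R)=\min_{\cE}(\|\cE\|-|\cE|)$ over link-set partitions $\cE$ rather than re-deriving each merge separately.
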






\subsection{Parameter estimation in the Bernoulli model}
The binomial community size distribution $\pi(x) = \binom{n}{x}(1-p)^{n-x}p^x$ with $p \in (0,1)$ gives $(\pi)_r = n!/(n-r)!p^r$ for all integers $r \ge 1$. We parameterize the model with three positive constants $(\lambda, \mu, q)$ (with $q$ not depending on scale) and choose 
\begin{align}
\label{eq:BernoulliModel}
m \weq \floor{\frac{\mu^2 q}{\lambda} n},  \quad \mbox{and} \quad p \weq \frac{\lambda}{\mu q} n^{-1},
\end{align}
where $\mu$ can be interpreted as the mean number of communities of a node. The following (asymptotic) relations follow from the results in Section 3:
\begin{align*}
 \lambda \wsim nqmp^2, \quad
 \sigma^2 \wsim n q m p^2 \left( 1 + n q p \right), \quad
 \tau \wsim \frac{q}{1 + mp},
\end{align*}
from which one may solve
\[
\mu \weq \frac{\lambda^2}{\sigma^2-\lambda} \quad \mbox{and} \quad q \weq \tau(1+\frac{\lambda^2}{\sigma^2-\lambda}).
\]
After substituting the asymptotic densities from Section 3 and estimating them using empirical counts we obtain (after some algebra) the estimators 
\begin{gather*}
 \hat\lambda
 \weq (n-1) \binom{n_0}{2}^{-1} N_{K_2}(\Gsub), \\
\hat\mu \weq \frac{2N_{K_2}(\Gsub)^2}{n_0 N_{S_2}(\Gsub) - 2N_{K_2}(\Gsub)^2}, \quad \hat q \weq \frac{3 n_0 N_{K_3}(\Gsub)}{n_0 N_{S_2}(\Gsub)-2N_{K_2}(\Gsub)^2}.
\end{gather*}
To summarize, we estimate the parameters $\mu$ and $q$ by counting the numbers of links, 2-stars, and triangles from an induced subgraph of $n_0$ nodes. Alternatively, this can be seen as a way of fitting the transitivity and the mean and variance of the degrees. The theoretical justification is given by the following theorem.



\begin{theorem}
$\hat \lambda$, $\hat\mu$, and $\hat q$ converge in probability to the true values $\lambda$, $\mu$, and $q$, under the Bernoulli model defined by \eqref{eq:BernoulliModel} given $n_0 \gg n^{2/3}$.
\end{theorem}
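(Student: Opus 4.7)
The plan is to combine the concentration of the empirical subgraph counts from Theorem~\ref{the:ObservedDensities} with the sparse-regime asymptotics of Propositions~\ref{the:LinkProbabilityPassive}--\ref{the:ModelDensities}, and then read off the limits of $\hat\lambda, \hat\mu, \hat q$ by elementary algebra. First I would check that the Bernoulli specification \eqref{eq:BernoulliModel} lies inside the balanced sparse regime: for $\pi = \Bin(n,p)$ with $p = \lambda/(\mu q n)$, the factorial moments are $(\pi)_r = (n)_r p^r \to (\lambda/(\mu q))^r$ for every fixed $r$, so in particular $(\pi)_4, (\pi)_6 \lesim 1$, and $m/n \to \mu^2 q/\lambda$ with $q$ constant --- this verifies Assumption~\ref{ass:Balanced} together with the moment hypotheses of Theorem~\ref{the:ObservedDensities}. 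Since $n_0 \gg n^{2/3}$, the theorem gives $N_R(\Gsub) = (1 + o_\pr(1)) \E N_R(\Gsub)$ for $R \in \{K_2, S_2, K_3\}$.

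Next I would substitute the sparse-regime formulas into $\E N_{K_2}(\Gsub) = \binom{n_0}{2} \pr(\glink)$, $\E N_{S_2}(\Gsub) = 3\binom{n_0}{3} \pr(\gtwostar)$, and $\E N_{K_3}(\Gsub) = \binom{n_0}{3}\pr(\gtriangle)$, using $p_r = p^r$ for the Bernoulli model. The relevant asymptotics are $nmp^2q \to \lambda$, $q^2 m^2 p^4 \to \lambda^2/n^2$, $q^2 mp^3 \to \lambda^2/(\mu n^2)$, and $q^3 mp^3 \to q\lambda^2/(\mu n^2)$, while the remaining triangle terms $(m)_2 p_2 p_3$ and $(m)_3 p_2^3$ are of lower order $O(n^{-3})$. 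Consistency of $\hat\lambda$ is then immediate: $\hat\lambda = (1+o_\pr(1))(n-1) \pr(\glink) \to \lambda$.

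For $\hat\mu$ and $\hat q$ the delicate point is the denominator $n_0 N_{S_2}(\Gsub) - 2 N_{K_2}(\Gsub)^2$. A direct computation gives
\[
 n_0\,\E N_{S_2}(\Gsub) \wsim \frac{n_0^4 \lambda^2}{2n^2}\Bigl(1 + \frac{1}{\mu}\Bigr),
 \qquad
 2\bigl(\E N_{K_2}(\Gsub)\bigr)^2 \wsim \frac{n_0^4 \lambda^2}{2n^2},
\]
so these two summands share the same asymptotic order, but their difference $n_0^4 \lambda^2/(2\mu n^2)$ is still of that order. Consequently $n_0 N_{S_2}(\Gsub) - 2 N_{K_2}(\Gsub)^2 = (1+o_\pr(1)) n_0^4 \lambda^2/(2\mu n^2)$, and combined with $3 n_0 \E N_{K_3}(\Gsub) \sim n_0^4 q \lambda^2/(2\mu n^2)$ and $2 (\E N_{K_2}(\Gsub))^2 \sim n_0^4 \lambda^2/(2n^2)$, the $n_0^4/n^2$ factors cancel in the estimator ratios, producing $\hat\mu \to \mu$ and $\hat q \to q$ in probability.

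The main obstacle is precisely this leading-order cancellation in the denominator: because $n_0 \E N_{S_2}$ and $2(\E N_{K_2})^2$ agree to leading order up to a $\mu$-dependent multiplicative constant, one must verify that the stochastic errors $o_\pr(n_0 \E N_{S_2})$ and $o_\pr((\E N_{K_2})^2)$ remain asymptotically negligible relative to the smaller same-order residual $n_0^4 \lambda^2/(2\mu n^2)$, and that this residual is strictly positive with high probability. This is why the condition $n_0 \gg n^{2/3}$ (and the moment hypothesis $(\pi)_6 \lesim 1$) from Theorem~\ref{the:ObservedDensities} is required, rather than the weaker $n_0 \gg n^{1/2}$ that would already suffice for $\hat\lambda$ alone.
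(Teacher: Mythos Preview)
Your proposal is correct and follows essentially the same route as the paper: verify that the Bernoulli specification \eqref{eq:BernoulliModel} satisfies Assumption~\ref{ass:Balanced} and the moment hypotheses of Theorem~\ref{the:ObservedDensities}, invoke that theorem together with Propositions~\ref{the:LinkProbabilityPassive} and~\ref{the:ModelDensities}, and then read off the limits of the rational expressions via continuous mapping. The paper's own argument is simply a two-line version of what you wrote.

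One remark on your final paragraph: the ``main obstacle'' you describe is not actually an obstacle. You correctly computed that $n_0\,\E N_{S_2}(\Gsub) \sim \tfrac{n_0^4\lambda^2}{2n^2}(1+\mu^{-1})$ and $2(\E N_{K_2}(\Gsub))^2 \sim \tfrac{n_0^4\lambda^2}{2n^2}$, so the two terms have the \emph{same order but different leading constants}; their difference $\tfrac{n_0^4\lambda^2}{2\mu n^2}$ is therefore of that very same order, not of lower order. There is no leading-order cancellation, and consequently the $o_\pr(n_0^4 n^{-2})$ error terms coming from Theorem~\ref{the:ObservedDensities} are automatically $o_\pr$ of the difference. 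In other words, after dividing numerator and denominator of $\hat\mu$ and $\hat q$ by $n_0^4 n^{-2}$, each piece converges in probability to a nonzero constant and the continuous mapping theorem applies directly --- no extra care is needed. Your proof is fine; the caution is just unnecessary.
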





\begin{proof}
The assumptions of Theorem \ref{the:ObservedDensities} and Propositions \ref{the:LinkProbabilityPassive} and \ref{the:PassiveDegreeMeanVariance} are satisfied by \eqref{eq:BernoulliModel}, which establishes the claim for $\hat \lambda$.
Dividing and multiplying both $\hat\mu$ and $\hat q$ by $n_0^2$ yields rational expressions where the numerators and denominators converge in probability to  nonzero constants by Theorem \ref{the:ObservedDensities} and Propositions \ref{the:LinkProbabilityPassive} and \ref{the:ModelDensities}. The claim now follows from the continuous mapping theorem.
\end{proof}

\section{Numerical experiments}
\subsection{Attainable regions in the Bernoulli model}
The relations $\sigma^2 \geq \lambda$, $\tau \in (0,1)$ and $\tau \leq (1+\lambda^2/(\sigma^2-\lambda))^{-1}$ restrict the attainable combinations $(\lambda, \tau, \sigma^2)$; see Figure \ref{fig:attainparam}. 
To obtain a model with a large asymptotic transitivity coefficient, one may choose a low mean degree and a large degree variance. The flexibility gained by allowing $q\leq1$ is also illustrated in Figure \ref{fig:attainprob}. The discreteness of the attainable points $(\pr(\glink), \pr(\gtriangle))$ is obvious with $q=1$, whereas the points with $q\leq1$ fill a large part above the curve $\pr(\gtriangle)=\pr(\glink)^3$. 

\begin{figure}[H]
\begin{centering}
        \includegraphics[width=0.5\textwidth]{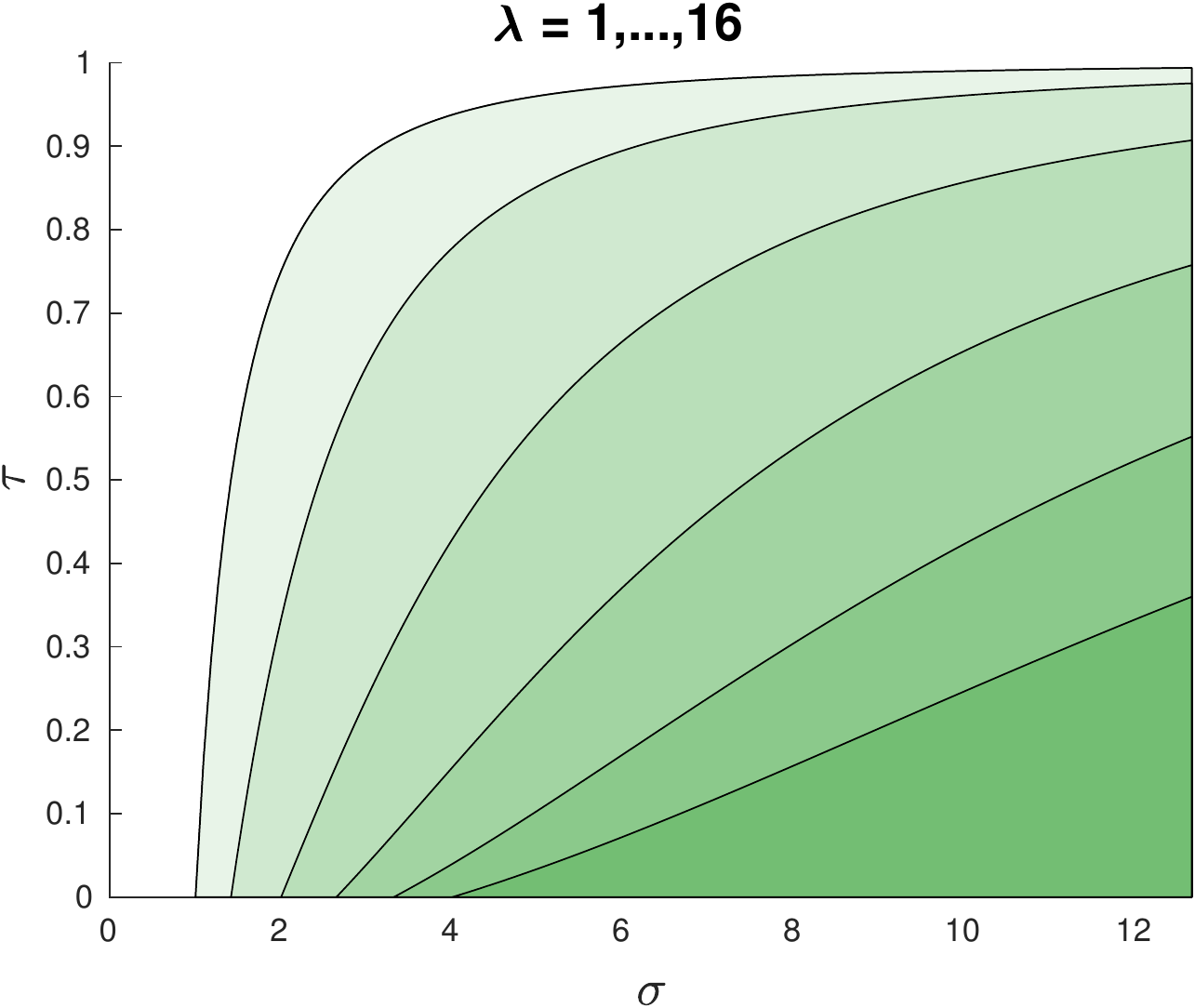}
 \caption{Attainable combinations of $(\tau, \sigma)$ for $\lambda \in \{1, 2, 4, 7, 11, 16 \}$. Combinations with $q=1$ lie on the curves. The points under the curves are obtained by setting $q\leq1$. }
 \label{fig:attainparam}
 \end{centering}
\end{figure}

\begin{figure}[H]
      \begin{subfigure}[t]{0.5\textwidth}
        \includegraphics[height=2in]{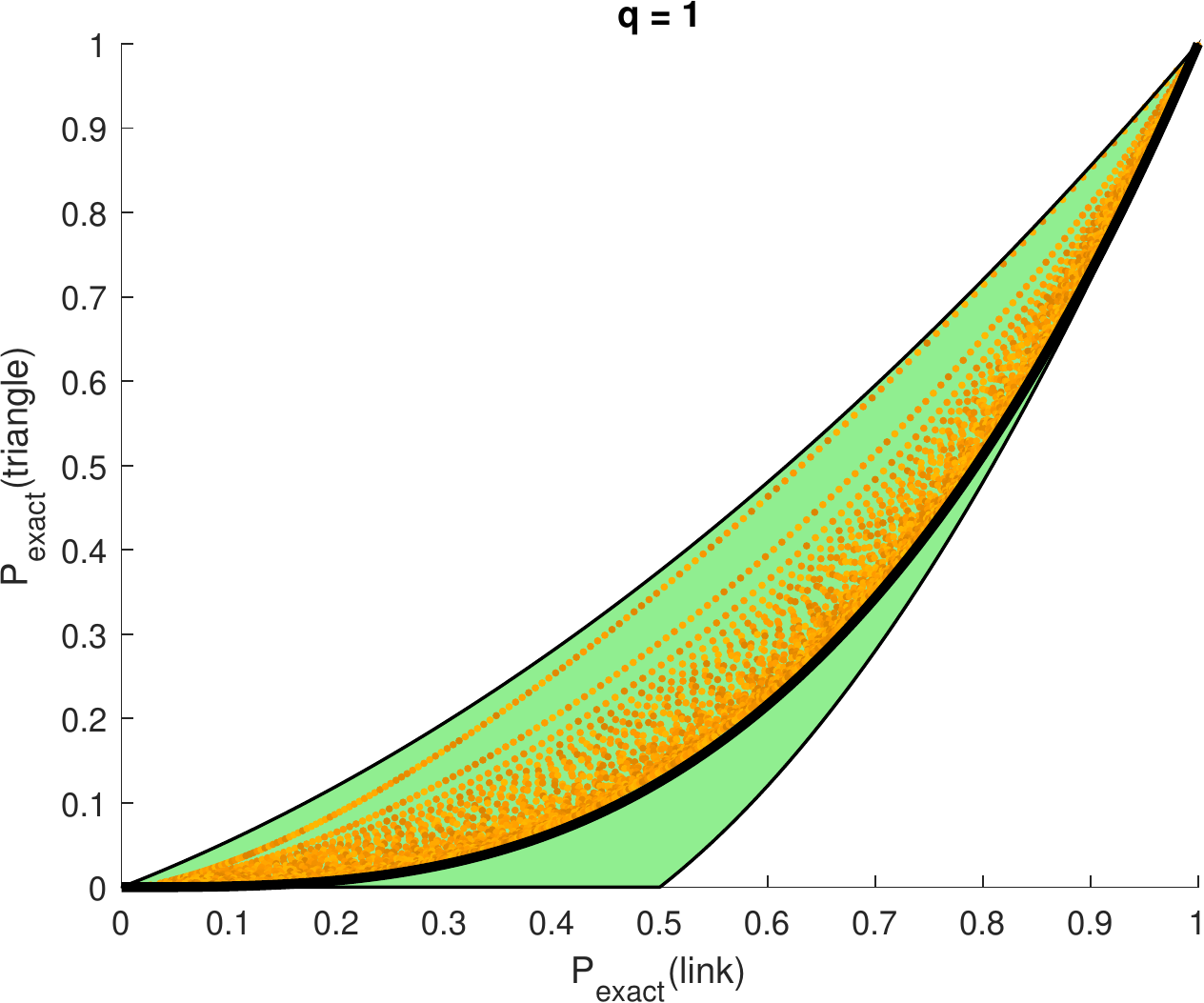}
 		\caption{}
      \end{subfigure}
      \begin{subfigure}[t]{0.5\textwidth}
        \includegraphics[height=2in]{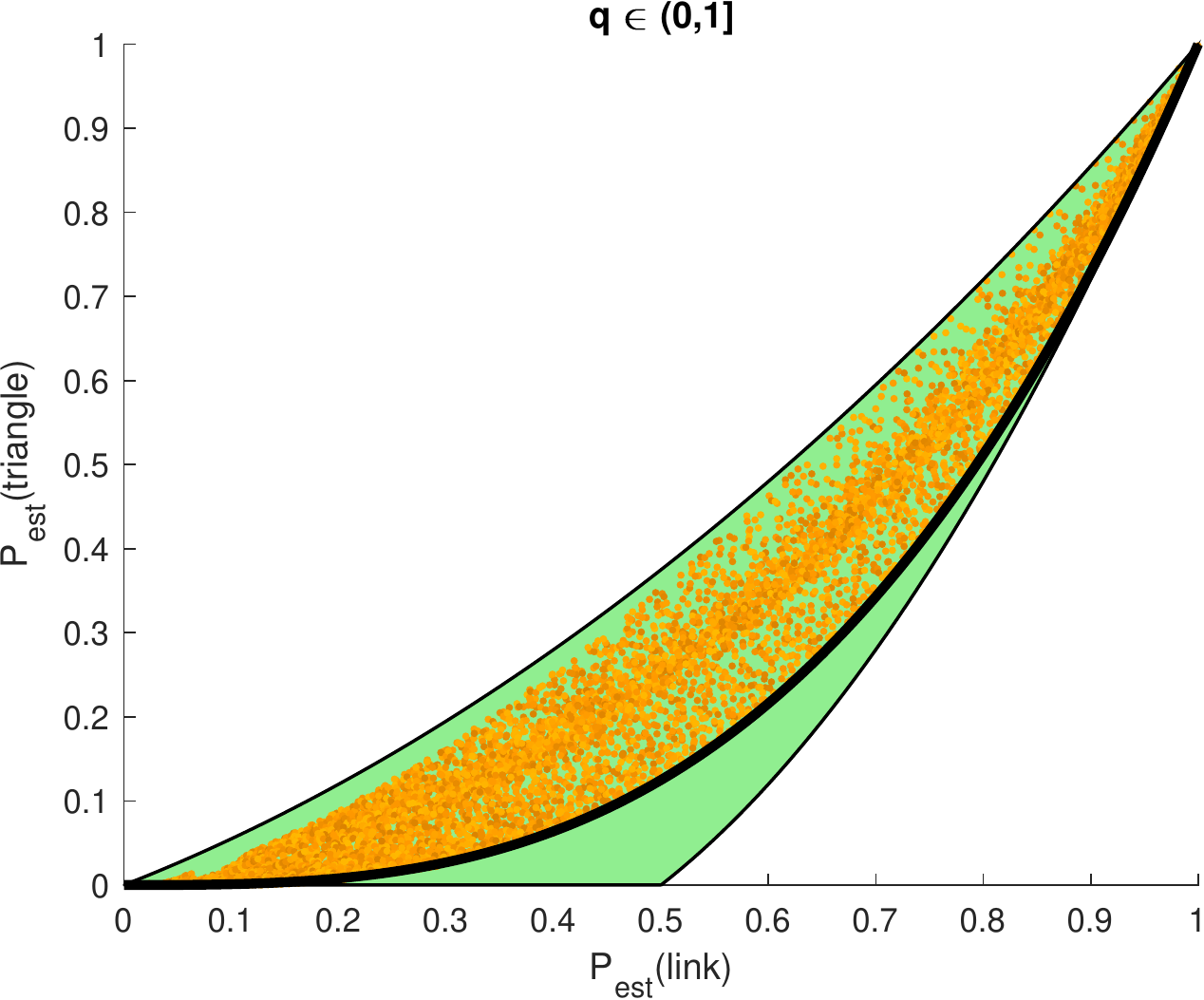}
 		\caption{}
      \end{subfigure}
\caption{Attainable combinations of link and triangle probabilities in Bernoulli models with different values of $\lambda$ and $\tau$ ($\lambda\leq500$, $\tau\geq0.0002$), and (a) $q=1$ (exact probabilities) and (b) $q\leq1$ (averages of 1000 Monte Carlo samples). The solid curves represent theoretical bounds, and the thick black curve the \Erdos--\Renyi graph. }
\label{fig:attainprob}
\end{figure}


\subsection{Real data}


Ten data sets of different sizes were analyzed using the Bernoulli model. The whole data sets were used for estimation, i.e., $n_0=n$. The obtained estimates are listed in Table \ref{tab:data}.  Because we essentially fit $\tau$ and $\lambda$, these values are listed in Table \ref{tab:data} only for illustration purposes. In the largest data sets the estimates of $q$ are very small, which might suggest that the structure of the model is not strongly supported by the data. For one of the data sets, Dolphin,  the estimate of $q$ is outside the allowed range $(0,1)$. This may be related to the denseness of the network. On the other hand, simulation results in \cite{Karjalainen_Leskela_2017} suggest that the size $n=62$ may not be sufficient for estimators based on asymptotic moment equations. 
\begin{table}[ht]
\centering
\small
{\setlength{\tabcolsep}{0.55em}
\begin{tabular}{ll|lllll|ll} 
\toprule 
Data set & $n$ & $\hat \lambda$  & $\hat \tau$ & $\hat q$ & $\hat m$ & $\hat \sigma$  &  $\hat m_{q=1}$ & $\hat \sigma_{q=1}$ \\ 
\midrule
ca-AstroPh$^1$       & 18772   & 21.1 & 0.32 & 0.47 &  100  & 30.6 & 4092 & 15.1\\
ca-HepPh$^1$         & 12008   & 19.7 & 0.66 &  0.78 & 15   & 46.6 & 162 & 28.2\\
Dolphin$^2$          & 62      & 5.1  & 0.31 & - (2.36) & 1255 & 3.0 & 61 & 4.1\\
email-Eu-core$^1$    & 1005    & 32.0 & 0.27  & 0.47 & 8 & 37.0 & 236 & 20.1\\
Facebook$^2$         & 63731   & 25.6 & 0.15 & 0.21  & 90   & 40.0 & 82756 & 11.8\\
Flickr$^1$           & 105938  & 43.7 &  0.40 & 0.46  & 23   & 115.6 & 5377 & 36.4\\
Flixster$^2$         & 2523386 & 6.3  & 0.01  & 0.014 & 4   & 36.6 & 2.1*$10^9$ & 2.6 \\
Twitter$^2$          & 2919613 & 8.8  & 0.006 & $<$0.001 &77   & 20.9 & 9.3*$10^9$ & 3.0\\
USAir97$^3$ & 332     & 12.8 &  0.40   &  0.56 & 2 & 20.1 & 60.1 & 11.0\\ 
wiki-talk$^{1}$         & 2394385 & 3.9  & 0.002 & 0.002 & $<$1    & 102.5 & 1.27*$10^{11}$ & 2.0\\ 
\bottomrule 
\end{tabular} }
\caption{Parameter estimates of the Bernoulli model for collaboration networks in astrophysics and high energy physics, a social network of bottleneck dolphins, an e-mail network from a research institution, a geographically local Facebook network, a Flickr image network, a social network of Flixster users, a Twitter network of users who mention each other in their tweets, a US aiport network, and a Wikipedia communications network. Data sets from $^1$ \cite{snapnets}, $^2$ \cite{konectdata}, and $^3$ \cite{pajekdata}.}
\label{tab:data}
\end{table}

The rightmost two columns in Table \ref{tab:data} display reference values of $m$ and $\sigma$ estimated for the RIG model ($q=1$) using the estimators introduced in \cite{Karjalainen_Leskela_2017}. These estimators give very large values for $m$ and grossly underestimate $\sigma$ in the largest data sets. These observations speak for the significantly improved model fit when using the thinned RIG model instead of the classical RIG model.

\section{Technical proofs}
\subsection{Analysis of link density}
\begin{proof}[Proof of Proposition~\ref{the:LinkProbabilityPassive}]
The probability of the event $\cE_k$ that nodes 1 and 2 are linked via community $k$ can be written as
\[
 \pr(\cE_k)
 \weq \pr\left(V_k \supset \{i,j\}, \, C_{12,k}=1 \right)
 \weq p_2 q.
\]
Because the events $\cE_1, \dots, \cE_m$ are independent, it follows that
\[
 \pr(\glink)
 \weq \pr \Big( \bigcup_k \cE_k \Big)
 \weq 1 - \prod_k \pr(\cE_k^c)
 \weq 1 - \left( 1 - p_2 q \right)^m.
\]
The inequality $1-x \le e^{-x}$ and the union bound $\pr(\cup_k \cE_k) \le \sum_k \pr(\cE_k)$ imply that
$
 1 - e^{-mp_2q}
 \le
 \pr(\glink)
 \le m p_2 q,
$
from which we see that $\pr(\glink) \ll 1$ if and only if $mp_2q \ll 1$. The approximation formula \eqref{eq:LinkProbabilityPassive} follows from the Bonferroni's bounds
\[
 m p_2 q - \binom{m}{2} (p_2 q)^2
 \wle \pr(\glink)
 \wle m p_2 q.
\]
\end{proof}

\subsection{Analysis of 2-star covering density}

\begin{proof}[Proof of Proposition~\ref{the:ModelDensities}: equation~\eqref{eq:Twostar}]
Consider a 2-star with node set $\{1,2,3\}$ and link set $\{\{1,2\}, \{1,3\}\}$. Denote by $\cB_{A,k} = \{V_k \supset A\}$ the event that community $k$ covers a node set $A$, and by $\cC_{ij,k}$ the event that $C_{ij,k} = 1$. Then $\cE_{ij,k} = \cB_{ij,k} \cap \cC_{ij,k}$ 
%
is the event that node pair $ij$ is linked by community $k$.
Then the probability that $G$ contains the 2-star as a subgraph is given by
\[
 \pr(\gtwostar)
 \weq \pr\Big(\bigcup_{k \in [m]^2} \cF_{k}\Big),
\]
where $\cF_{k} = \cE_{12,k_1} \cap \cE_{13,k_2}$ for an ordered community pair $k=(k_1,k_2)$. Observe that $\pr(\cF_{k}) = q^2 p_3$ for $k_1=k_2$ and $\pr(\cF_{k}) = q^2 p_2^2$ otherwise. Therefore,
\[
 \pr(\gtwostar)
 \wle \sum_{k \in [m]^2} \pr(\cF_{k})
 \weq m q^2 p_3 + (m)_2 q^2 p_2^2.
\]
To prove the claim using Bonferroni's bounds, it suffices to show that 
\begin{equation}
 \label{eq:TwostarUpperBound}
 \sum_{(k,\ell)} \pr( \cF_k, \cF_\ell)
 \wll q^2 \Big( mp_3 + (m)_2 p_2^2 \Big),
\end{equation}
where the sum on the left is over all $(k,\ell)$-pairs with $k,\ell \in [m]^2$ and $k \ne \ell$.

We will now compute the sum on the left side of \eqref{eq:TwostarUpperBound}. Note that
\[
 \pr( \cF_k, \cF_\ell )
 \weq q^{\abs{\{k_1,\ell_1\}} + \abs{\{k_2,\ell_2\}}}
 \pr( \cB_{12,k_1}, \cB_{13,k_2}, \cB_{12,\ell_1}, \cB_{13,\ell_2} ).
\]
Therefore, for example, for a $(k,\ell)$-pair of the form $(k_1,k_2,\ell_1,\ell_2) = (a, a, b, c)$ with distinct $a, b, c$ we have
\[
 \pr( \cF_k, \cF_\ell )
 \weq q^4 \pr( \cB_{123,a}, \cB_{12,b}, \cB_{13,c} )
 \weq q^4 p_2^2 p_3.
\]
The table below displays the values of $\pr( \cF_k, \cF_\ell )$ for all combinations of $k \ne \ell$, and the cardinalities of such combinations.

\begin{center}
\small
\begin{tabular}{lrr}
\toprule
$(k_1,k_2,\ell_1,\ell_2)$& Cardinality & $\pr( \cF_k, \cF_\ell )$ \\
\midrule
$(a, b, c, d)$ & $(m)_4$ & $q^4 p_2^4$ \\
$(a, b, a, c)$ or $(a,b,c,b)$ & $2 (m)_3$ & $q^3 p_2^3$ \\
$(a, a, b, c)$ or $(a, b, c, c)$ or $(a, b, c, a)$ or $(a, b, b, c)$ & $4 (m)_3$ & $q^4 p_2^2 p_3$ \\
$(a, a, b, b)$ or $(a, b, b, a)$ & $2(m)_2$ & $q^4 p_3^2$ \\
$(a, a, a, b)$ or $(a, a, b, a)$ or $(a,b,a,a)$ or $(b, a, a, a)$ & $4(m)_2$ & $q^3 p_2 p_3$ \\
\bottomrule
\end{tabular}
\end{center}
As a consequence,
\begin{align*}
 \sum_{(k,\ell)} \pr( \cF_k, \cF_\ell )
 \weq (m)_4 q^4 p_2^4 + 2 (m)_3 q^3 p_2^3 + 4 (m)_3 q^4 p_2^2 p_3 + 2 (m)_2 q^4 p_3^2 + 4 (m)_2 q^3 p_2 p_3
\end{align*}
By noting that $p_3 \le p_2$ and $mp_2q \ll 1$, we see that the first three terms on the right are bounded from above by $4(mp_2q)q^2 (m)_2 p_2^2$, and the last two terms on the right are bounded from above by $4 (mp_2 q) q^2 mp_3$.
Hence the above sum is at most $12 (mp_2 q) q^2 ( mp_3 + (m)_2 p_2^2)$, claim \eqref{eq:TwostarUpperBound} is valid, and the claim follows.
\end{proof}

\subsection{Analysis of triangle covering density}

\begin{proof}[Proof sketch of Proposition~\ref{the:ModelDensities}: equation~\eqref{eq:Triangle}]
Consider a triangle with node set $\{1,2,3\}$. Denote by $\cE_{e, k} = \{V_k \supset e, \, C_{e, k} = 1 \}$ the event that node pair $e$ is linked via community $k$. Then $\pr(\gtriangle) = \pr(\cup_{k \in [m]^3} \cF_k)$, where
$
 \cF_{k}
 = \cE_{12,k_1} \cap \cE_{13,k_2} \cap \cE_{23,k_3}
$
is the event that the node pairs of the triangle are linked via communities of the triplet $k=(k_1,k_2,k_3)$. Because
\[
 \pr(\cF_k)
 \weq q^3 \pr( V_{k_1} \supset 12 , V_{k_2} \supset 13, V_{k_3} \supset 23)
 \weq
 \begin{cases}
   q^3 p_3, & \quad \abs{\{k_1,k_2,k_3\}} = 1, \\
   q^3 p_2 p_3, & \quad \abs{\{k_1,k_2,k_3\}} = 2, \\
   q^3 p_2^3, & \quad \abs{\{k_1,k_2,k_3\}} = 3, \\
 \end{cases}
\]
the union bound implies that
\[
 \pr(\gtriangle)
 \wle \sum_k \pr(\cF_k)
 \wle q^3 \Big( m p_3 + 3(m)_2 p_2 p_3 + (m)_3 p_2^3 \Big).
\]
By similar techniques as in the proof of~\eqref{eq:Twostar}, one can show that
\[
 \sum_{(k,\ell): k \ne \ell} \pr(\cF_k, \cF_\ell)
 \wlesim (mq p_2) \sum_k \pr(\cF_k)
 \wll \sum_k \pr(\cF_k),
\]
and the claim follows by Bonferroni's bounds. (The details of the lengthy computations are omitted.)
\end{proof}

\subsection{Analysis of model transitivity}

\begin{proof}[Proof of Proposition \ref{the:TransitivityPassive}]
By applying Propositions~\ref{the:ModelDensities} we find that
\begin{align*}
 \tau
 \weq (1+o(1)) q \frac{m p_3 + 3 (m)_2 p_2 p_3 + (m)_3 p_2^3}{m p_3 + (m)_2 p_2^2}
 \weq (1+o(1)) q \left(   \frac{mp_3}{mp_3 + (m)_2 p_2^2} + R \right),
\end{align*}
where
\begin{align*}
 R
 \weq \frac{3 (m)_2 p_2 p_3 + (m)_3 p_2^3}{m p_3 + (m)_2 p_2^2}
 \wle m p_2 \frac{3 m p_3 + (m)_2 p_2^2}{m p_3 + (m)_2 p_2^2} 
 \wle 3 mp_2.
\end{align*}
The assumption $mq p_2 \ll 1$ now implies that $q R = o(1)$. Hence we conclude
\[
 \tau
 \weq (1+o(1)) \left( \frac{q mp_3}{mp_3 +(m)_2 p_2^2} + o(1) \right)
 \weq \frac{q p_3}{p_3 + (m-1) p_2^2}+ o(1).
\]
\end{proof}

\subsection{Analysis of degree moments}
\begin{proof}[Proof of Proposition~\ref{the:PassiveDegreeMeanVariance}]
By expressing the degree of node $i$ using the adjacency matrix as $D = \sum_{j \ne i} G_{i,j}$ and taking expectations, we find that
\begin{align*}
 \E(D) &\weq (n-1) \pr(\glink), \\
 \E(D^2) &\weq (n-1) \pr(\glink) + (n-1)(n-2) \pr(\gtwostar).
\end{align*}
By Propositions~\ref{the:LinkProbabilityPassive} and \ref{the:ModelDensities} we find that
\begin{align*}
 \pr(\glink) &\weq (1+O(mp_2q)) m p_2 q, \\
 \pr(\gtwostar) - \pr(\glink)^2
 &\weq (1+O(mp_2q)) q^2 \Big( mp_3 + (m)_2 p_2^2 - m^2 p_2^2 \Big).
\end{align*}
Hence $\E(D) \sim mnp_2 q$, and by the formula $\Var(D) = \E(D^2) - (\E D)^2$,
\begin{align*}
 \Var(D)
 &\weq (1+O(n^{-1}) \Big( n \pr(\glink) + n^2 \Big( \pr(\gtwostar) - \pr(\glink)^2 \Big) \Big) \\
 &\weq (1+O(n^{-1}) (1+O(mp_2q)) \Big( mn q p_2 + m n^2 q^2 ( p_3 - p_2^2 ) \Big).
\end{align*}
\end{proof}

\subsection{Analysis of observed link density}

\begin{proof}[Proof of Theorem~\ref{the:ObservedDensities}: equation~\eqref{eq:ObservedDensityLink}]
Let us denote by $\hat N = N_{K_2}(\Gsub)$ the number of links in the observed graph $\Gsub$. The assumptions $(\pi)_2 \gesim 1$ and $(\pi)_4 \lesim 1$ imply that $p_2 \asymp n^{-2}$ and $p_r \lesim n^{-r}$ for $r=3,4$.  Because $m \asymp n$, and $q \gesim 1$, with the help of Proposition~\ref{the:LinkProbabilityPassive}, we see that
\[
 \pr(\glink)
 \weq (1+o(1))m p_2 q
 \wasymp n^{-1},
\]
and
\[
 \E \hat N
 \weq \binom{n_0}{2} \pr(\glink)
 \wasymp n_0^2 n^{-1}
 \wgg 1.
\]

Denote by $\pr(\glinktwo)$ the probability that $G$ contains any particular pair of disjoint node pairs (e.g., pairs \{1,2\} and \{3,4\}). Note that
\begin{align*}
 \Var(\hat N)
 &\weq \sum_e \sum_{e'} \pr( e \in E(\Gsub), e' \in E(\Gsub) ) - \binom{n_0}{2}^2 \pr(\glink)^2 \\
 &\weq \binom{n_0}{2} \pr(\glink) + (n_0)_3 \pr(\gtwostar) + \binom{n_0}{2}\binom{n_0-2}{2} \pr(\glinktwo) - \binom{n_0}{2}^2 \pr(\glink)^2 \\
 &\wle n_0^2 \pr(\glink) + n_0^3 \pr(\gtwostar) + \binom{n_0}{2}^2 \Big( \pr(\glinktwo) - \pr(\glink)^2 \Big).
\end{align*}
Note that $\pr(\glink) \asymp n^{-1}$ and $\pr(\gtwostar) \lesim n^{-2}$.
Furthermore,
\begin{align*}
 \pr(\glinktwo)
 &\weq \pr( \cup_k \cup_\ell \{V_k \supset \{1,2\}, C_{12,k}=1, V_\ell \supset \{3,4\}, C_{34,\ell}=1\}) \\
 &\wle \sum_k \sum_\ell \pr(\{V_k \supset \{1,2\}, C_{12,k}=1, V_\ell \supset \{3,4\}, C_{34,\ell}=1\}) \\
 &\weq (m)_2 p_2^2 q^2 + m p_4 q^2 \weq (1+o(1)) \pr(\glink)^2 + O(n^{-3}),
\end{align*}
so that
\begin{align*}
 \Var(\hat N)
 &\wlesim n_0^2 n^{-1} + n_0^3 n^{-2} + n_0^4 n^{-3} + o(1) (\E \hat N)^2 \\
 &\wle 3 n_0^2 n^{-1} + o(1) (\E \hat N)^2 
 \wll (\E \hat N)^2.
\end{align*}

\end{proof}

\subsection{Analysis of observed 2-star covering density}

\begin{proof}[Proof sketch of Theorem~\ref{the:ObservedDensities}: equation~\eqref{eq:ObservedDensityTwostar}]
Let us denote $\hat N = N_{S_2}(\Gsub)$. Note that
\[
 \hat N
 \weq \sum_R 1_{A_R}
\]
where the sum ranges over the set of all $S_2$-isomorphic subgraphs of $K_{[n_0]}$, and $1_{A_R}$ is the indicator of the event $A_R$ that $\Gsub$ contains $R$ as a subgraph. The assumptions $(\pi)_2 \gesim 1$ and $(\pi)_6 \lesim 1$ imply that $p_2 \asymp n^{-2}$ and $p_r \lesim n^{-r}$ for $r=3,\dots,6$.  Because $m \asymp n$, and $q \gesim 1$, with the help of Proposition~\ref{the:ModelDensities}, we see that
\begin{equation}
 \label{eq:TwostarAsymp}
 \pr(\gtwostar)
 \weq q^2 \Big( m p_3 + (m)_2 p_2^2 \Big)(1+o(1))
 \wasymp n^{-2},
\end{equation}
and
\[
 \E \hat N
 \weq 3 \binom{n_0}{3} \pr(\gtwostar)
 \wasymp n_0^3 n^{-2}
 \wgg 1.
\]
The above relation underlines the role of assumption $n_0 \gg n^{2/3}$. This guarantees that there are lots of (dependent) samples to sum in the observed graph.

Let us next analyze the variance of $\hat N$.
By applying the formula $\Var(\hat N) = \E(\hat N^2) - (\E \hat N)^2$ and noting that $A_R \cap A_{R'} = A_{R \cup R'}$, we see that
\begin{align*}
 \Var(\hat N)
 &\weq \sum_R \sum_{R'} \pr(A_R, A_{R'}) - \sum_R \sum_{R'} \pr(A_R) \pr(A_{R'})
 \weq \sum_{i=0}^3 M_i,
\end{align*}
where
\begin{equation}
 \label{eq:UpperBoundMi}
 M_i
 \weq \sum_R \sum_{R': |V(R) \cap V(R')| = i} \nquad \Big( \pr(A_{R \cup R'}) - \pr(A_R)^2 \Big).
\end{equation}
For $i \ge 1$, we approximate $M_i$ from above by omitting the $\pr(A_R)$ term in \eqref{eq:UpperBoundMi}. 
By generalizing the analytical technique used in \cite{Karjalainen_Leskela_2017} (details will be available in the extended version), it can be shown that for any graph $R$ such that $|V(R)| \le 6$,
\begin{equation}
 \label{eq:KappaBoundSketch}
 \pr(A_R)
 \wlesim n^{-\kappa(R)},
\end{equation}
where $\kappa(R) = \min_{\cE}(||\cE|| - |\cE|)$, with the minimum taken across all partitions of $E(R)$ into nonempty sets, where $|\cE|$ is the number of parts in the partition, and we set $||\cE|| = \sum_{E \in \cE} |E^\flat|$ where $E^\flat = \cup_{e \in E} e$ denotes the set of nodes covered by the node pairs of $E$, so that for example, $\{\{1,2\}\}^\flat = \{1,2\}$ and $\{ \{1,3\}, \{2, 3\} \}^\flat = \{1,2,3\}$. Table~\ref{tab:2-stars} summarizes the values of $\kappa(R)$ for the type of graphs that can be obtained as unions of two 2-stars. By applying~\eqref{eq:KappaBoundSketch}, it follows that
\begin{align*}
 M_1 &\wlesim n_0^5 \Big( \pr(\gfourstar) +  \pr(\gfourpath) + \pr(\gchair) \Big) \wlesim n_0^5 n^{-4}, \\
 M_2 &\wlesim n_0^4 \Big( \pr(\gthreestar) + \pr(\gthreepath) + \pr(\gthreepan) + \pr(\gfourcycle)\Big) \wlesim n_0^4 n^{-3}, \\
 M_3 &\wlesim n_0^3 \Big( \pr(\gtwostar) +  \pr(\gtriangle) \Big) \wlesim n_0^3 n^{-2}.
\end{align*}
Because $n_0 \gg n^{2/3}$, it follows that $M_i \lesim n_0^3 n^{-2} \ll (\E \hat N)^2$ for $i=1,2,3$.

\begin{table}[t]
\begin{center}
\scriptsize
\begin{minipage}{50mm}
\includegraphics[width=\textwidth]{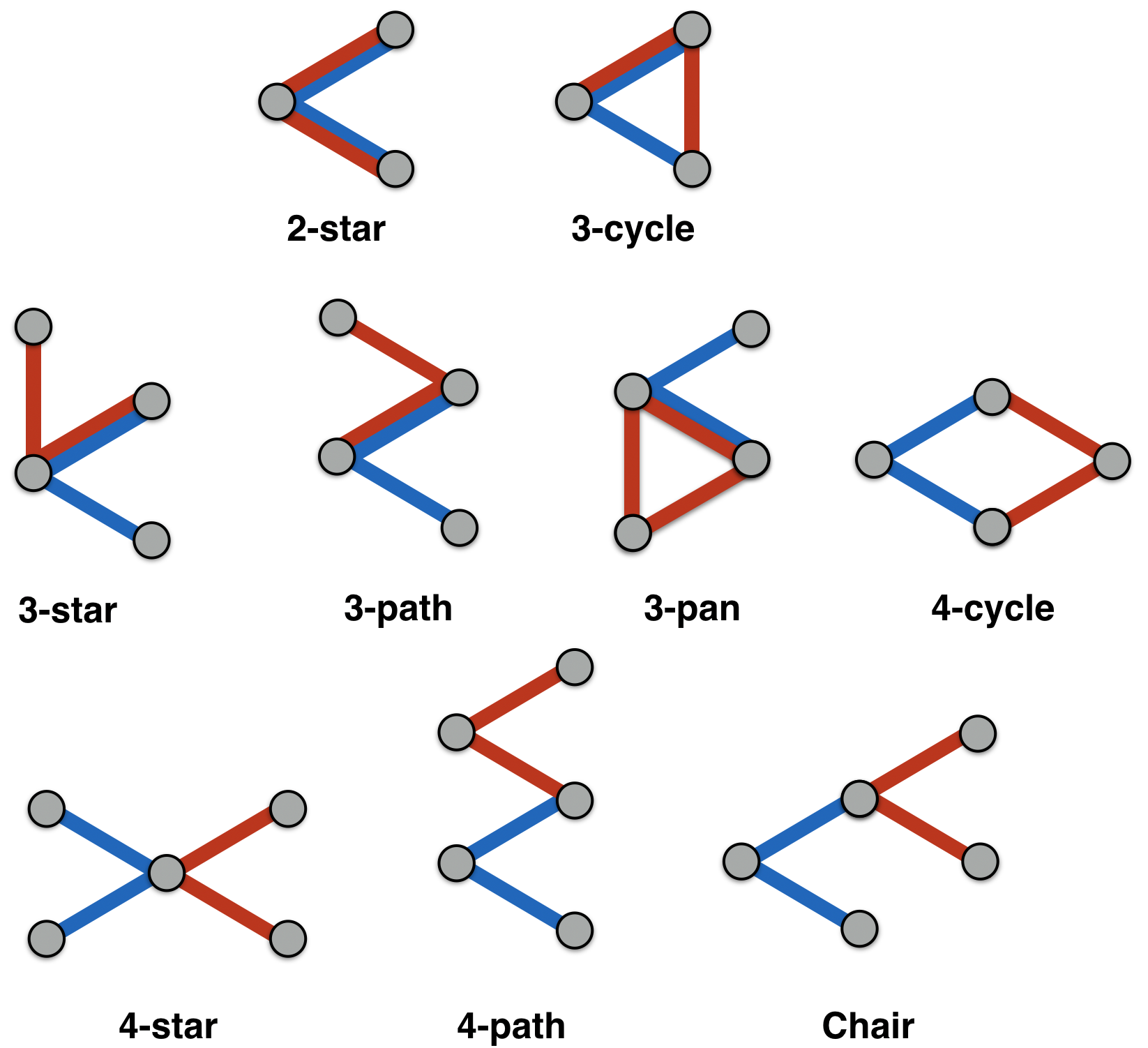}
\end{minipage}
\begin{tabular}{lccc}
\toprule
$R$ & $|V(R)|$ & $|E(R)|$ & $\kappa(R)$ \\
\midrule
2-star & 3 & 2 & 2  \\ 
3-cycle & 3 & 3 & 2  \\ 
3-star & 4 & 3 & 3  \\ 
3-path & 4 & 3 & 3  \\ 
3-pan & 4 & 4 & 3  \\ 
4-cycle & 4 & 4 & 3  \\ 
4-star & 5 & 4 & 4  \\ 
4-path & 5 & 4 & 4  \\ 
Chair & 5 & 4 & 4  \\ 
Disjoint 2-stars & 6 & 4 & 4  \\ 
\bottomrule
\end{tabular}
\caption{\label{tab:2-stars} Values of $\kappa(R)$ (obtained using an exhaustive computer search) for graphs obtained as unions of two 2-stars.}
\end{center}
\end{table}

The $M_0$-term in the variance formula \eqref{eq:UpperBoundMi} satisfies
\[
 M_0
 \wasymp n_0^6 \Big( \pr(\gtwostar^2) - \pr(\gtwostar)^2 \Big)
\]
where $\pr(\gtwostar^2)$ indicates the probability that $G$ contains a particular union of two disjoint 2-stars as a subgraph. Here we need more careful analysis because the technique used to bound $M_i$ for $i \ge 1$ would only yield an upper bound for $M_0$ of the same order as $(\E \hat N)^2$.
%
Nevertheless, a tedious but straightforward computation (details will be available in the extended version) involving all 15 partitions of the link set of a union of two disjoint 2-stars can be used to verify that
\begin{align*}
 \pr(\gtwostar^2)
 &\wle q^4 \Big( m^2 p_3^2 + 2 m^3 p_2^2 p_3 + m^4 p_2^4 \Big) + O(n^{-5}) \\
 &\weq (1+o(1)) \pr(\gtwostar)^2 + O(n^{-5}).
\end{align*}
By comparing this with \eqref{eq:TwostarAsymp}, we find that
$
 \pr(\gtwostar^2) - \pr(\gtwostar)^2
 \ll \pr(\gtwostar)^2,
$
and
\[
 M_0
 \wasymp n_0^6 \Big( \pr(\gtwostar^2) - \pr(\gtwostar)^2 \Big)
 \wll n_0^6 \pr(\gtwostar)^2
 \wasymp (\E \hat N)^2.
\]
We may now conclude that
$
 \Var(\hat N)
 = \sum_{i=0}^3 M_i
 \ll (\E \hat N)^2,
$
and hence the claim follows by Chebyshev's inequality.
\end{proof}

\subsection{Analysis of observed triangle density}

\begin{proof}[Proof sketch of Theorem~\ref{the:ObservedDensities}: equation~\eqref{eq:ObservedDensityTriangle}]
Let us denote by $\hat N = N_{K_3}(\Gsub)$ the number of triangles in the observed graph $\Gsub$.
The assumptions $(\pi)_2 \gesim 1$ and $(\pi)_6 \lesim 1$ imply that $p_2 \asymp n^{-2}$ and $p_r \lesim n^{-r}$ for $r=3,\dots,6$.  Because $m \asymp n$, and $q \gesim 1$, with the help of Proposition~\ref{the:ModelDensities}, we see that
\[
 \pr(\gtriangle)
 \weq (1+o(1))m p_3 q^3
 \wasymp n^{-2},
\]
and
\[
 \E \hat N
 \weq \binom{n_0}{3} \pr(\gtriangle)
 \wasymp n_0^3 n^{-2}
 \wgg 1.
\]

To show that $\hat N$ is with high probability close to $\E \hat N$, by Chebyshev's inequality it suffices to verify that  $\Var(\hat N) \ll (\E \hat N)^2$.  By applying the formula $\Var(\hat N) = \E \hat N^2 - (\E \hat N)^2$ and noting that $A_R \cap A_{R'} = A_{R \cup R'}$, we see that
\begin{align*}
 \Var(\hat N)
 &\weq \sum_R \sum_{R'} \pr(A_R, A_{R'}) - \sum_R \sum_{R'} \pr(A_R) \pr(A_{R'})
 \weq \sum_{i=0}^3 M_i,
\end{align*}
where
\[
 M_i
 \weq \sum_R \sum_{R': |V(R) \cap V(R')| = i} \Big( \pr(A_{R \cup R'}) - \pr(A_R)^2 \Big).
\]
In analogy with the proof of \eqref{eq:ObservedDensityTwostar} one can show (details omitted) that $M_i \ll (\E \hat N)^2$ for $i=1,2,3$ by analyzing the subgraph containment probabilities of $G$ for unions of two triangles. Again, the $M_0$ term requires special attention. A careful analysis of the various patterns through which the communities of the model can cover the links of two disjoint triangles (details available in the extended version) shows that
\[
 \pr(\gtriangle^2) - \pr(\gtriangle)^2
 \wll \pr(\gtriangle)^2.
\]
This implies $M_0 \ll (\E \hat N)^2$ and allows to conclude that 
$
 \Var(\hat N)
 \weq \sum_{i=0}^3 M_i
 \wll (\E \hat N)^2.
$
Hence the claim follows by Chebyshev's inequality.

\end{proof}

\bibliographystyle{splncs}
\bibliography{lslReferences-Joona-additions}

\end{document}